\newtheorem{X}{X}[section]
\newtheorem{theorem}[X]{Theorem}
\newtheorem{lemma}[X]{Lemma}
\numberwithin{equation}{section}
\renewcommand{\le}{\ensuremath{\leqslant} }
\renewcommand{\ge}{\ensuremath{\geqslant} }
\newcommand{\cA}{\ensuremath{\mathcal{A}} }
\newcommand{\cP}{\ensuremath{\mathcal{P}} }
\newcommand{\N}{\ensuremath{\mathbb{N}}}
\newcommand{\genlegendre}[4]{%
  \genfrac{(}{)}{}{#1}{#3}{#4}%
  \if\relax\detokenize{#2}\relax\else_{\!#2}\fi
}
\newcommand{\legendre}[3][]{\genlegendre{}{#1}{#2}{#3}}
\title{Symmetric primes revisited}
\author{William Banks}
\address{Department of Mathematics, University of Missouri, Columbia, MO
65211, USA}
\email{bankswd@missouri.edu}
\author{Paul Pollack}
\address{Department of Mathematics, University of Georgia, Athens, GA 30602, USA}
\email{pollack@uga.edu}
\author{Carl Pomerance}
\address{Mathematics Department, Dartmouth College, Hanover, NH 03755, USA}
\email{carl.pomerance@dartmouth.edu}
\newenvironment{dedication}
        {\vspace{6ex}\begin{quotation}\begin{center}\begin{em}}
        {\par\end{em}\end{center}\end{quotation}}
\begin{document}
\maketitle
\vskip-1cm
\begin{dedication}
{In memory of Peter Fletcher (1939--2019)}
\end{dedication}

\begin{abstract}
A pair of odd primes is said to be {\it symmetric} if
each prime is congruent to one modulo their difference.
A theorem from 1996 by Fletcher, Lindgren,
and the third author provides an upper bound on the
number of primes up to $x$ that belong to a symmetric pair.
In the present paper, that theorem is improved to what is likely to be
the best possible result.  We also establish that there exist
infinitely many symmetric pairs of primes.
In fact, we show that for every
integer $m\ge 2$ there is a string of $m$ consecutive primes,
any two of which form a symmetric pair.
\end{abstract}

\vskip1cm

\section{Introduction}
A pair of distinct odd primes $\{p,q\}$ is said to be a
\emph{symmetric pair} if
$$\gcd(p-1,q-1)=|p-q|.$$  For example, every twin prime pair $\{p,p+2\}$
is a symmetric pair.
We say that a prime is \emph{symmetric}
if it is a member of some symmetric pair; otherwise,
we say that it is \emph{asymmetric}.

Symmetric primes arise naturally when ruminating on a common textbook proof of Gauss's quadratic reciprocity law (QRL). Consider the rectangle $S$ with $(0,0)$ and $(p/2,q/2)$ as opposite corners, and let $l$ be the diagonal joining those corners. Let $S(q,p)$ (resp., $S(p,q)$) be the number of interior lattice points below (resp., above) $l$. Eisenstein, in his version of Gauss's third QRL proof, showed that $$\legendre{q}{p}=(-1)^{S(q,p)},\quad\text{and}\quad\legendre{p}{q}=(-1)^{S(p,q)}.$$
Since $\ell$ has no interior lattice points, $S(q,p)+S(p,q)$ is the total number of lattice points interior to $S$, which is $\frac{p-1}{2} \frac{q-1}{2}$; the law of quadratic reciprocity follows immediately. It is shown in \S2 of Fletcher~\emph{et al.}~\cite{FLP} that $\{p,q\}$ is symmetric precisely when $S(q,p)=S(p,q)$.

Most primes are asymmetric;
this is a consequence of \cite[Theorem~3.1]{FLP}, which asserts
that the number $S(x)$ of symmetric primes
$p\le x$ is $O(\pi(x)/(\log x)^{0.027})$. It is conjectured in \cite{FLP}
that the exponent $0.027$ can be
improved to $\eta+o(1)$, where
$$
\eta:=1-\frac{1+\log\log 2}{\log 2}=0.08607\cdots\,.
$$
In this note we prove the conjecture.
\begin{theorem}
\label{thm:main1}
For all large $x$, we have
$$
S(x)\le\frac{\pi(x)}{(\log x)^\eta}(\log\log x)^{O(1)}.
$$
\end{theorem}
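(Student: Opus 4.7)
The plan is to parameterise symmetric primes by their witnesses and split according to the number of distinct prime factors of $p-1$. For each symmetric prime $p\le x$ there is an even divisor $d\mid p-1$ with $p+d$ (or $p-d$) prime; setting $r(p)=\#\{d\mid p-1: d\text{ even},\ p+d\text{ prime}\}$ and $\kappa=1/\log 2$, one has
\[
S(x)\;\le\;|E(x)|\;+\;\#\{p\in F(x): r(p)\ge 1\}\;+\;(\text{mirror term with $p-d$}),
\]
where
\[
E(x)=\{p\le x\text{ prime}: \omega(p-1)\ge\kappa\log\log x\},\qquad F(x)=\{p\le x\text{ prime}\}\setminus E(x).
\]

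The first term $|E(x)|$ is controlled by a large-deviations estimate for $\omega$ on shifted primes. Via Bombieri--Vinogradov applied to a suitable moment-generating function (as in the Sathe--Selberg formula, or Erd\H os's original argument for shifted primes), one obtains
\[
|E(x)|\;\ll\;\frac{\pi(x)}{(\log x)^{Q(\kappa)}}(\log\log x)^{O(1)},\qquad Q(c)=1-c+c\log c,
\]
and a direct computation gives $Q(\kappa)=1-(1+\log\log 2)/\log 2=\eta$, exactly the target exponent. Thus the "many-prime-factor" case is handled for free once the large-deviation input is set up.

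For the second term we use Markov's inequality and a weighted sieve:
\[
\#\{p\in F(x): r(p)\ge 1\}\;\le\;\sum_{p\in F(x)}r(p)\;=\;\sum_{\substack{d\ge 2\\ d\text{ even}}}\#\{p\in F(x): d\mid p-1,\ p+d\text{ prime}\}.
\]
For each $d$, a Brun or Selberg upper-bound sieve gives the count $\ll x\mathfrak S(d)/\phi(d)(\log x)^2$ with the usual twin-prime singular series; the crucial refinement is to incorporate the constraint $\omega((p-1)/d)<\kappa\log\log x-\omega(d)$ into the sieve, via a Selberg-weighted sieve (or a Hal\'asz--Tenenbaum moment argument). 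The resulting Poisson large-deviation factor of $(\log x)^{-\eta+o(1)}(\log\log x)^{O(1)}$ compounds with the sieve's $1/(\log x)^2$, and summing over all even $d$ (using $\sum_d\mathfrak S(d)/\phi(d)\asymp\log x$) yields the required bound $\pi(x)/(\log x)^\eta(\log\log x)^{O(1)}$, matching the contribution of $|E(x)|$.

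\textbf{Main obstacle.} The technical heart of the proof is the weighted sieve estimate for the restricted count $\#\{p\in F(x): d\mid p-1,\ p+d\text{ prime}\}$: the $\omega$-restriction must interact cleanly with the twin-prime-type sieve for $(p,p+d)$ so that the Poisson savings for $\omega((p-1)/d)$ and the sieve savings for the prime pair add, yielding, after summation over $d$, the overall factor $(\log x)^{-1-\eta}$ demanded by the theorem. Ensuring uniformity in $d$ as $\omega(d)$ varies --- and handling the marginal ranges where $d$ itself has close to $\kappa\log\log x$ distinct prime factors --- is the most delicate bookkeeping, presumably treated either by Selberg weights tuned to $\omega(p-1)$ or by a direct moment-generating-function argument handling both constraints simultaneously.
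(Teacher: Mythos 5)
Your high-level decomposition --- discard primes $p$ with $\omega(p-1)\ge\kappa\log\log x$ via a Poisson large-deviation bound (this part is fine, and is essentially the paper's treatment of the primes with $\Omega(p-1)>L$), then bound the rest by the first moment $\sum_{p}r(p)=\sum_d\#\{p:\ d\mid p-1,\ p\pm d\text{ prime}\}$ --- is the same skeleton as the paper's. But the step that carries all the weight is wrong as stated and is not actually proved. You claim that for each fixed $d$ the constraint $\omega((p-1)/d)<\kappa\log\log x-\omega(d)$ buys a factor $(\log x)^{-\eta+o(1)}$ on top of the twin-prime sieve bound $x\mathfrak S(d)/\phi(d)(\log x)^2$, and that the $d$-sum then contributes a further $\log x$. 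No such per-$d$ saving exists: for $d$ with $\omega(d)=O(1)$ the constraint reads $\omega(m)<(1.44+o(1))\log\log x$ for $m=(p-1)/d$, which holds for almost all $m$, so the restricted count is of the same order as the unrestricted one. The $(\log x)^{-\eta}$ saving is an aggregate phenomenon over the divisor pairs $(d,m)$ with $dm\mid p-1$: writing $i=\omega(d)$, $j=\omega(m)$ and $E=\log\log x+O(1)$, what one actually needs is
$$
\sum_{i+j<\kappa\log\log x}\frac{E^i}{i!}\,\frac{E^j}{j!}=\sum_{k<\kappa\log\log x}\frac{(2E)^k}{k!}\ll(\log x)^{2-\eta}(\log\log x)^{O(1)},
$$
as against $(\log x)^2$ without the restriction; it is the doubling $E\mapsto 2E$ produced by the two-variable factorization (the multiplication-table effect) that yields the exponent $\eta$. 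A proof organized around a uniform per-$d$ saving cannot succeed; $d$ and its cofactor must be summed jointly.

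Second, you have not supplied the sieve input that makes the $\omega$-restriction compatible with the two primality conditions; your own ``main obstacle'' paragraph concedes this, and it is precisely the content of the proof rather than routine bookkeeping. The paper's device is to first discard the $O(\pi(x)/\log x)$ primes with $P^+(p-1)\le x^{1/\log\log x}$, then write $p-1=ar$ with $r=P^+(p-1)$ large and $a=dm$: the sieve becomes a clean three-condition Brun sieve in the single variable $r$ (requiring $r$, $ar+1$, and one of $ar+1\pm d$, $ar+1\pm dr$ to be prime), giving $x(\log\log x)^{O(1)}/(dm(\log x)^3)$ uniformly, while the $\Omega<L$ constraint sits on the fixed cofactor $dm$ and is disposed of by the elementary sum above. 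Note also that the shift $|p-q|$ is an even divisor of $p-1$ that may be divisible by $r$, so the forms $p\pm dr$ must be included; and your parametrization by a single divisor $d$ ranging up to $x$ leaves the large-$d$ range, where the bound $x\mathfrak S(d)/\phi(d)(\log x)^2$ fails, untreated. These gaps are fixable, but fixing them amounts to redoing the argument in the paper's form.
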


The constant $\eta$ appears at several places in the literature.  An early
appearance is in connection with the Erd\H os multiplication table problem
where, thanks to the work of Erd\H os, Tenenbaum, and Ford, we now know
that the number $M(N)$ of distinct entries in the $N\times N$
multiplication table is $N^2(\log N)^{-\eta}(\log\log N)^{O(1)}$.
(In fact, Ford \cite{F} has further shown that if the implied constant $O(1)$
is replaced with $-3/2$, the resulting expression has the same
magnitude as $M(N)$.) A more recent appearance of $\eta$ occurs
in Chow and Pomerance \cite{CP}, where the odd legs in integer-sided right triangles with prime hypotenuse are considered. (The present note
uses some techniques from \cite{CP}.)


It was left as an open problem in \cite{FLP} to prove that there are infinitely many symmetric primes.  The next theorem uses an old result of Heath-Brown~\cite{HB}
together with the framework of the recent results of Zhang, Maynard, Tao,
\emph{et al.}\ on small gaps between primes.

\begin{theorem}
\label{thm:main2}
For every integer $m\ge 2$, there exists a string of $m$ consecutive primes,
any two of which form a symmetric pair.
\end{theorem}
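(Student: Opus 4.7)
A pair $\{p,q\}$ is symmetric iff $(q-p)\mid(p-1)$, so for $m$ consecutive primes $p_1<\cdots<p_m$ to be pairwise symmetric we need $(p_j-p_i)\mid(p_i-1)$ for all $i<j$. Parameterizing $p_i=n+h_i$ with shifts $h_1<\cdots<h_m$, this becomes the simultaneous congruence system
\[
n\equiv 1-h_i\pmod{h_j-h_i}\qquad(1\le i<j\le m).
\]
A brief check shows this system is consistent iff $(h_j-h_i)\mid(h_i-h_1)$ for every $i<j$; translating so $h_1=0$, the condition reads $(h_j-h_i)\mid h_i$ for $2\le i<j\le m$. Call such an $m$-tuple \emph{symmetric}. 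When it holds, CRT collapses the system to a single residue class $n\equiv a\pmod Q$ with $Q=\operatorname{lcm}\{h_j-h_i\}$.

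My plan is then: (i) for each $m\ge 2$, produce an admissible symmetric $m$-tuple $\mathcal{S}$ (small examples to build intuition: $\{0,4,6\}$ for $m=3$, $\{0,12,18,24\}$ for $m=4$, $\{0,180,192,216,240\}$ for $m=5$); (ii) embed $\mathcal{S}$ inside a larger admissible $k$-tuple $\mathcal{H}$, with $k$ as large as Maynard--Tao requires to force $m$ primes in an admissible tuple; (iii) for each padding shift $h\in\mathcal{H}\setminus\mathcal{S}$, impose via CRT a congruence $n+h\equiv 0\pmod{p_h}$ with $p_h$ a small prime, forcing $n+h$ composite, and combine these with the symmetry congruence $n\equiv a\pmod Q$ to put $n$ in a single residue class $n\equiv a'\pmod{Q'}$ (admissibility of this class with respect to $\mathcal{H}$ must be checked along the way); and (iv) apply Maynard--Tao in its Maynard--Pintz consecutive-primes form, restricted to $n\equiv a'\pmod{Q'}$, to find infinitely many $n$ for which $n+\mathcal{S}$ is an $m$-tuple of consecutive primes. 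By (i) and the choice of $a$, those primes are pairwise symmetric.

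The crux---and my main anticipated obstacle---is step (i): constructing admissible symmetric $m$-tuples for every $m\ge 2$. The divisibility condition is very restrictive: with $h_1=0$, each $h_j-h_2$ (for $j\ge 3$) must divide $h_2$, forcing $h_2$ to have at least $m-2$ positive divisors; further pairwise conditions must coexist with admissibility modulo every small prime. This combinatorial task for unbounded $m$ is exactly where the old result of Heath-Brown~\cite{HB} should enter, supplying the structural input needed to build symmetric tuples of arbitrary size.
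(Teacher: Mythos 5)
Your reduction of pairwise symmetry to the congruence system $n\equiv 1-h_i\pmod{h_j-h_i}$ is correct, and step (i) is less of an obstacle than you fear: with $h_1=0$, your condition $(h_j-h_i)\mid h_i$ for $2\le i<j\le m$ says exactly that $\gcd(h_i,h_j)=h_j-h_i$ on $\{h_2,\dots,h_m\}$, which is precisely what Heath-Brown's lemma supplies (the paper's Lemma 3.1; the divisibility relations survive scaling the set by a primorial, which is how one arranges admissibility). The genuine, fatal gap is in steps (iii) and (iv). After you force every padding form $n+h$, $h\in\mathcal{H}\setminus\mathcal{S}$, to be composite, you are asking for infinitely many $n$ in a fixed residue class for which \emph{all} $m$ of the forms $n+h$, $h\in\mathcal{S}$, are simultaneously prime. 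That is the Hardy--Littlewood prime $m$-tuples conjecture for the tuple $\mathcal{S}$ (restricting $n$ to an arithmetic progression does not weaken it), and no version of Maynard--Tao delivers it. The Maynard--Tao theorem, and equally its consecutive-primes refinements (Banks--Freiberg--Turnage-Butterbaugh, and Maynard's ``Dense clusters''), guarantee only that \emph{some} $m$-element subset of the $k=k_m$ admissible forms becomes simultaneously prime, with no control whatsoever over which subset; this is exactly why $k_m$ must be taken very large compared with $m$. Killing $k-m$ of the forms by congruence conditions removes precisely the slack that makes the method work: you are left needing $m$ primes among $m$ forms.

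The paper circumvents this by turning your difficulty around: instead of prescribing which $m$ forms are to be prime, it arranges for \emph{every} pair among all $k$ forms to yield a symmetric pair, so that whichever $m$-subset the sieve happens to select is acceptable. Concretely, it first applies Maynard--Tao to the forms $a_it+1$, where $\{a_1,\dots,a_k\}$ is a Heath-Brown set with $\gcd(a_i,a_j)=|a_i-a_j|$, to produce $k$ primes $b_1,\dots,b_k$ satisfying $\gcd(b_i-1,b_j-1)=|b_i-b_j|$; it then feeds these into the Banks--Freiberg--Turnage-Butterbaugh theorem with the forms $gn+b_i$, $g=\prod_i(b_i-1)$, so that $b_i-1\mid P_i-1$ and $|P_i-P_j|=|b_i-b_j|=\gcd(b_i-1,b_j-1)$ for the resulting consecutive primes $P_i$, forcing every pair $\{P_i,P_j\}$ to be symmetric. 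If you wish to keep your additive set-up, the analogous repair is to demand that the \emph{entire} $k$-tuple $\mathcal{H}$ (not merely an $m$-element sub-tuple) satisfy your divisibility condition compatibly with admissibility of the restricted residue class; some such two-stage or whole-tuple construction is what your outline is missing.
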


Of course, Theorem \ref{thm:main2} implies the infinitude of symmetric primes. Our proof of Lemma \ref{lem:two} below, in conjunction with the methods of \cite{M2,P1,P2}, could be developed to prove that $S(x) \gg \pi(x)/(\log x)^{49}$.  Comparing this lower bound with the upper bound of Theorem \ref{thm:main1}, it is tempting to conjecture that $S(x) = \pi(x)/(\log{x})^{c+o(1)}$, as $x\to\infty$, for some positive constant $c$. In \cite{FLP} a heuristic argument is presented suggesting that this conjecture holds with $c=\eta$; that is, the inequality of Theorem \ref{thm:main1} is actually an equality.

%

In Sections 2 and 3 we prove the theorems.
In Section 4 we present some new computations of symmetric primes.
In Section 5 we close with a few problems of a somewhat different nature.

\section{The proof of Theorem \ref{thm:main1}}

Let $\omega(n)$ denote the number of distinct primes that divide $n$,
 and let $\Omega(n)$ denote the
number of prime factors of $n$ counted with multiplicity. Let $P^+(n)$
denote the largest prime factor of $n>1$, and put $P^+(1)=0$.

Let $S_1(x)$ denote the number of primes $p\le x$ with
 $P^+(p-1)\le x^{1/\log\log x}$.
Since the number of integers $n\le x$ with $P^+(n)\le x^{1/\log\log x}$ is
$O(x/(\log x)^2)$  (see de Bruijn \cite[Eq.\ (1.6)]{dB})
it follows that $S_1(x)=O(\pi(x)/\log x)$.

Next, let $S_2(x)$ denote the number of
primes $p\le x$ with $P^+(p-1)>x^{1/\log\log x}$ and $\Omega(p-1)>L$, where
$L=\lfloor (1/\log 2)\log\log x\rfloor$.  We claim that
\begin{equation}
\label{S2}
S_2(x)\le\frac{\pi(x)}{(\log x)^\eta}(\log\log x)^{O(1)}.
\end{equation}
For any prime counted by $S_2(x)$, write $p=ar+1$, where
$r=P^+(p-1)>x^{1/\log\log x}$.    For any fixed choice of
$a<x^{1-1/\log\log x}$, the number of primes $r\le x/a$ with $ar+1$ prime is
(by Brun's method; see \cite[Eq.\ (6.1)]{HR}) at most
$$
\frac x{a(\log x)^2}(\log\log x)^{O(1)}.
$$
We sum this expression over $a$ assuming $\Omega(a)\ge L$.
For $L\le\Omega(a)\le 1.9\log\log x$ we use \cite[Theorem~08]{HT}, finding that
$\sum1/a\le(\log x)^{1-\eta}(\log\log x)^{O(1)}$; this
is consistent with our goal \eqref{S2}.
For larger values of $\Omega(a)$ we use \cite[Exercise~05]{HT}, getting $\sum1/a\ll (\log x)^{0.69}$.
This establishes the claim \eqref{S2}.

To finish the proof, we bound the number of symmetric
primes $p\le x$ with $P^+(p-1)>x^{1/\log\log x}$ and $\Omega(p-1)\le L$.
For any such prime, write $p=ar+1$ with
$r=P^+(p-1)>x^{1/\log\log x}$ and $\Omega(a)< L$.
Since $p$ is symmetric,
there is some $d\mid a$ with at least one of $p+d$, $p-d$, $p+dr$, $p-dr$ prime.  Write $a=dm$.  For a given
pair $d,m$ with $dm< x^{1-1/\log\log x}$, let $R(x,d,m)$ denote the number of primes
$r\le x/dm$ with $dmr+1$ prime and at least one of $dmr+d+1$, $dmr-d+1$, $dmr+dr+1$, $dmr-dr+1$  prime.
  Again by Brun's method we have uniformly for $x$ large that
$$
R(x,d,m)\le \frac{x}{dm(\log x)^3}(\log\log x)^{O(1)}.
$$
It remains to sum this expression over pairs $d,m$ with $dm< x^{1-1/\log\log x}$ and $\Omega(dm)< L$.
Let $E$ denote the reciprocal sum of all primes and prime powers less than $x$.
We have
\begin{align*}
\sum_{\substack{dm<x^{1-1/\log\log x}\\\Omega(dm)<L}}\frac1{dm}&
\le\sum_{i+j< L}\sum_{\substack{d<x\\\omega(d)=i}}\frac1d
\sum_{\substack{m<x\\\omega(m)=j}}\frac1m\\
&\le\sum_{i+j< L}\frac1{i!}E^i\frac1{j!}E^j=\sum_{k< L}\frac1{k!}E^k\sum_{i+j=k}\frac{k!}{i!j!}\\
&=\sum_{k< L}\frac1{k!}(2E)^k\ll\frac1{L!}(2E)^L,
\end{align*}
since $E=\log\log x+O(1)$.  A
short calculation then shows that this expression is $(\log x)^{2-\eta}(\log\log x)^{O(1)}$.
Thus, the sum of $R(x,d,m)$ over pairs $d,m$ is at most $\pi(x)(\log x)^{-\eta}(\log\log x)^{O(1)}$, so
completing the proof.

\section{The proof of Theorem \ref{thm:main2}}

In her dissertation, Spiro \cite{S}
showed that the equation $d(n)=d(n+5040)$ has
infinitely many solutions, where $d(n)$ is the divisor function.
Heath-Brown \cite{HB} has shown that one can replace
5040 with 1 in this theorem, a key ingredient (see also
\cite{HBtoo,HBcorr}) being the existence of sets with
the property described in the next lemma (and another property
that is not needed here).

\begin{lemma}
\label{lem:one}
For every $k\ge 2$ there is a set $\cA_k\subset\N$ 
with $k$ elements such that $\gcd(a,b)=|a-b|$ for all $a,b\in\cA_k$, $a\ne b$.
\end{lemma}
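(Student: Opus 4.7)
The plan is to prove the lemma by induction on $k$. The base case $k=2$ is immediate: $\cA_2=\{1,2\}$ satisfies $\gcd(1,2)=1=|1-2|$.

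For the inductive step, given a set $\cA_k$ with the stated property, I would set $L=\operatorname{lcm}(\cA_k)$ and define
\[
\cA_{k+1}:=\{L\}\cup\{L+a:a\in\cA_k\},
\]
a collection of $k+1$ distinct positive integers (distinctness is clear since every $a\in\cA_k$ is $\ge 1$). To verify the required gcd identity I would consider two kinds of pair. First, for $(L,L+a)$ with $a\in\cA_k$, the difference is $a$, and $\gcd(L,L+a)=\gcd(L,a)=a$ since $a\mid L$. Second, for $(L+a,L+b)$ with $a,b\in\cA_k$ distinct, the difference is $|a-b|$; by the inductive hypothesis $|a-b|=\gcd(a,b)$, so $|a-b|$ divides both $a$ and $b$, and therefore divides $L$, and therefore divides both $L+a$ and $L+b$. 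This gives $|a-b|\mid\gcd(L+a,L+b)$, while the reverse divisibility $\gcd(L+a,L+b)\mid|a-b|$ is automatic from the fact that a gcd always divides a difference. Equality follows.

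There is no real obstacle: the entire content of the argument is the choice $L=\operatorname{lcm}(\cA_k)$, which is precisely what one needs to ensure that every old pairwise difference (which by induction divides every element of $\cA_k$) also divides $L$, so that the required divisibilities carry over to the shifted copy $L+\cA_k$ and to the new element $L$.
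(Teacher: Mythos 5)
Your induction is correct and complete: the choice $L=\operatorname{lcm}(\cA_k)$ guarantees $a\mid L$ for every $a\in\cA_k$, which is exactly what makes both types of pair work out, and the reverse divisibility $\gcd(L+a,L+b)\mid |a-b|$ is indeed automatic. (As a sanity check, starting from $\{1,2\}$ your construction produces $\{2,3,4\}$ and then $\{12,14,15,16\}$, both of which satisfy the required identity.) The paper itself does not prove this lemma at all; it simply cites Heath-Brown's work on the divisor function at consecutive integers for the existence of such sets and records the example $\{6,8,9,12\}$ for $k=4$. So your contribution is a short, self-contained, elementary proof of a fact the paper treats as a black box, which is arguably more useful to a reader than the citation. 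The only cosmetic remark is that your sets grow roughly like an iterated lcm, so they are far from optimal in size, but the lemma asks only for existence, so this is irrelevant.
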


An example of such a set when $k=4$ is $\{6,8,9,12\}$.

If we have two numbers $a<b$ with $\gcd(a,b)=b-a$ and an
integer $n$ for which $p=an+1$ and $q=bn+1$ are both prime,
then $\{p,q\}$ is a symmetric pair.
Thus, under the prime $k$-tuples conjecture we obtain
infinitely many symmetric pairs.
Alternatively, the prime $k$-tuples conjecture implies
the existence of infinitely many twin prime pairs $\{p,p+2\}$, which
are symmetric.

The preceding statements are still conjectural, but the Maynard-Tao theorem
gives us a path for producing infinitely many symmetric primes.

\begin{lemma}
\label{lem:two}
For every $m\ge 2$ there is a set $\cP_m$ of
$m$  primes  such that  for all $p,q\in\cP_m$, $p\ne q$, we have
 $\gcd(p-1,q-1)=|p-q|$.
Moreover, one can find such a set whose least element exceeds $m$.
\end{lemma}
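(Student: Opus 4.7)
The plan is to combine Lemma \ref{lem:one} with the Maynard--Tao theorem on bounded gaps between primes (specifically, its version for admissible tuples of linear forms). Given $m\ge 2$, let $k=k(m)$ be the integer provided by Maynard--Tao with the property that every admissible $k$-tuple of linear forms captures at least $m$ primes simultaneously, for infinitely many values of the variable. Apply Lemma \ref{lem:one} to this $k$ to obtain $\cA_k=\{a_1,\dots,a_k\}$ with $\gcd(a_i,a_j)=|a_i-a_j|$ for all $i\ne j$. The idea is to look for $n$ for which many of the numbers $a_in+1$ are simultaneously prime: if $p=a_in+1$ and $q=a_jn+1$ are both prime, then $\gcd(p-1,q-1)=n\gcd(a_i,a_j)=n|a_i-a_j|=|p-q|$, so any $m$ such primes form the desired set $\cP_m$.

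To invoke Maynard--Tao I would first rescale the $a_i$ to guarantee admissibility. Let $N=\prod_{p\le k}p$ and consider the linear forms $L_i(n)=Na_in+1$ for $i=1,\dots,k$. The rescaling preserves the gcd condition, since $\gcd(Na_i,Na_j)=N|a_i-a_j|=|Na_i-Na_j|$. For any prime $p\le k$ we have $p\mid N$, hence $L_i(n)\equiv 1\pmod p$ for every $i$ and every $n$, so $p$ divides none of the $L_i(n)$. For any prime $p>k$, the set of residues $n\pmod p$ making some $L_i(n)$ vanish has size at most $k<p$, so there is an $n\pmod p$ avoiding all of them. Thus the tuple $(L_1,\dots,L_k)$ is admissible, and the Maynard--Tao theorem then yields infinitely many integers $n$ such that at least $m$ of the values $L_i(n)$ are prime.

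Fixing such an $n$ large enough that $Nn+1>m$ (which we may do since there are infinitely many choices), any $m$ of the primes appearing among $L_1(n),\dots,L_k(n)$ form a set $\cP_m$ with the required symmetric property, and each element of $\cP_m$ is at least $N\min(a_i)\cdot n+1>m$. The main obstacle in this plan is really bookkeeping rather than depth: one must cite the correct formulation of Maynard--Tao for linear forms (rather than for shifts $n+h_i$) and verify admissibility of the rescaled tuple. Both steps are standard, so the heavy lifting is imported wholesale from the bounded-gaps machinery.
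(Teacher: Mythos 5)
Your proposal is correct and follows essentially the same route as the paper: apply the Maynard--Tao theorem for linear forms to the tuple $\{a_it+1\}_{i=1}^k$ built from a Heath-Brown set $\cA_k$, and observe that simultaneous primality of $a_in+1$ and $a_jn+1$ transfers the relation $\gcd(a_i,a_j)=|a_i-a_j|$ to the primes. The only divergence is your rescaling by $N=\prod_{p\le k}p$, which is harmless but unnecessary, since the original tuple is already admissible (taking $t\equiv 0\pmod p$ makes every form $\equiv 1\pmod p$).
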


\begin{proof}
Recall that a $k$-tuple of linear forms $\{g_it + h_i\}_{i=1}^k$
is said to be \emph{admissible} if the associated polynomial
$\prod_i(g_it + h_i)$
has no fixed prime divisor, i.e., for each prime $p$ there is an
integer $t$ with none of $g_it+h_i$ divisible by $p$. To prove the
lemma, we apply a remarkable theorem of Maynard
(see, e.g., \cite[Theorem 3.4]{M2}) and Tao (unpublished)
in the direction of the prime $k$-tuples conjecture.  

\begin{theorem}[Maynard--Tao]
\label{thm:maynard-tao}
For every $m \ge 2$  there is an integer  $k=k_m$,
depending only on $m$, such that
 if $\{g_it + h_i\}_{i=1}^k$ is admissible,
\begin{equation*}
 g_1,\ldots,g_k > 0,
  \qquad\text{and}\qquad
   {\textstyle \prod_{1 \le i < j\le k} } (g_ih_j - g_jh_i) \ne 0,
\end{equation*}
then  $\{g_in + h_i\}_{i=1}^k$  contains $m$ primes
for infinitely many $n \in \N$.  In fact, the number of such $n\le x$ is $\gg x/(\log x)^k$.
\end{theorem}

We apply Theorem~\ref{thm:maynard-tao} to the linear forms
$\{a_it+1\}_{i=1}^k$, where $k= k_m$ and the integers $a_i$
are the elements of a set $\cA_k$ of the type described in Lemma~\ref{lem:one}.
Then  $\{a_in+1\}_{i=1}^k$ contains $m$ primes
for infinitely many $n\in\N$, and Lemma~\ref{lem:two} follows at once.
\end{proof}

One can adapt the work of \cite{P1,P2} to show that $50$ is an acceptable value of $k_2$ in Theorem \ref{thm:maynard-tao}. This explains the ``49'' in the lower bound $S(x) \gg \pi(x)/(\log{x})^{49}$ claimed in the introduction.

To prove Theorem \ref{thm:main2}, which asserts that
some of the sets $\cP_m$ in Lemma~\ref{lem:two} consist of
\emph{consecutive} primes, we use the following result from
Banks~\emph{et al.}~\cite{BFTB}.

\begin{theorem}[Banks--Freiberg--Turnage-Butterbaugh]
 \label{thm:BFTB}
Let $m \ge 2$  and $k =k_m$, where $k_m$ is
as in the Maynard--Tao theorem.
Let   $b_1, \ldots, b_k$    be   distinct   integers   such   that
$\{t + b_j\}_{j=1}^k$  is admissible, and let  $g$ be an arbitrary positive
integer that is coprime to $b_1\cdots b_k$.
Then, for some subset
$
\{h_1,\ldots,h_m\} \subseteq \{b_1,\ldots,b_k\},
$
there    are    infinitely    many    $n  \in  \N$    such   that
$gn + h_1,\ldots,gn + h_m$ are consecutive primes.
\end{theorem}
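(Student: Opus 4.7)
The plan is to upgrade Theorem~\ref{thm:maynard-tao} from ``$m$ primes within the admissible tuple'' to ``$m$ \emph{consecutive} primes of $\Z$ within the tuple'' by combining the Maynard lower-bound sieve with an auxiliary upper bound that kills extraneous primes lying between tuple entries. This requires opening up the Maynard sieve rather than treating Theorem~\ref{thm:maynard-tao} as a black box.

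First I verify that $\{gt+b_j\}_{j=1}^k$ satisfies the hypotheses of Theorem~\ref{thm:maynard-tao}. For primes $p\nmid g$, the map $t\mapsto gt$ is a bijection on $\Z/p\Z$, so admissibility transfers directly from $\{t+b_j\}_{j=1}^k$. For primes $p\mid g$, the hypothesis $\gcd(g,b_1\cdots b_k)=1$ gives $b_j\not\equiv 0\pmod p$, and hence $gt+b_j\equiv b_j\ne 0\pmod p$. The determinants $g b_j - g b_i = g(b_j-b_i)$ are nonzero since the $b_j$ are distinct. Now I set up the Maynard sieve weights $w_n\ge 0$ supported on $n\asymp N$, calibrated to $\{gt+b_j\}$ with sieve level $R=N^{\theta/2}$. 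Tuning $k$ depending on $m$ in the usual way yields $\bar X := \sum_n w_n X(n)\big/\sum_n w_n > m-1+\delta$ for some $\delta>0$, where $X(n):=\#\{j:gn+b_j\text{ prime}\}$.

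The new ingredient is an extraneous-primes count $Y(n)$, defined as the number of $c$ in the finite set $C:=([b_1,b_k]\cap\Z)\setminus\{b_1,\ldots,b_k\}$ for which $gn+c$ is prime. I claim $\sum_n w_n\,\mathbf{1}[gn+c\text{ prime}]\ll (\log N)^{-1}\sum_n w_n$ for each fixed $c\in C$. Heuristically the Maynard weights correlate with primality at the tuple positions $b_j$ but are essentially uncorrelated with primality at outside positions $c$, so this is the generic prime density times $\sum_n w_n$. Rigorously it follows from an upper-bound sieve applied to the enlarged tuple $\{gt+b_j\}\cup\{gt+c\}$: whether or not this tuple is admissible, the Selberg argument still produces this bound, with a smaller singular-series constant in the non-admissible case. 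Summing over the finitely many $c\in C$ yields $\bar Y:=\sum_n w_n Y(n)\big/\sum_n w_n = o(1)$.

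A final weighted pigeonhole closes the argument. Since $X\le k$, the bound $\bar X>m-1+\delta$ gives weighted probability $\Pr_w[X\ge m]>\delta/(k-m+1)$, an absolute positive constant, while Markov gives $\Pr_w[Y\ge 1]\le\bar Y=o(1)$. For $N$ large the events $\{X\ge m\}$ and $\{Y=0\}$ must therefore coincide on a positive-weight set of $n\asymp N$. For any such $n$, every prime in $[gn+b_1,gn+b_k]$ sits at a tuple position; selecting $m$ tuple-primes whose positions are consecutive among those primes present gives $m$ consecutive primes of $\Z$. Letting $N\to\infty$ yields infinitely many such $n$, and a concluding pigeonhole over the $\binom{k}{m}$ size-$m$ subsets of $\{b_1,\ldots,b_k\}$ fixes a single $\{h_1,\ldots,h_m\}$ that works for infinitely many $n$.

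The hardest step will be the uniform sieve bound $\sum_n w_n\mathbf{1}[gn+c\text{ prime}]\ll(\log N)^{-1}\sum_n w_n$ for $c\notin\{b_j\}$. Establishing it requires expanding $w_n$ as a square of divisor sums and performing the convolution over the extra prime condition at $c$, using Bombieri--Vinogradov uniformly across the $c\in C$. I also expect some bookkeeping to handle the enlarged tuple when it is inadmissible at a small prime, but in that case $T(c)=\sum_n w_n\mathbf{1}[gn+c\text{ prime}]$ only becomes smaller, so the overall $o(1)$ bound on $\bar Y$ remains.
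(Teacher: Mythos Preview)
The paper does not prove Theorem~\ref{thm:BFTB}; it is quoted from \cite{BFTB} and applied as a black box in the derivation of Theorem~\ref{thm:main2}, so there is no ``paper's own proof'' to compare against. That said, your proposal is a correct outline and is essentially the argument of \cite{BFTB}: verify that $\{gt+b_j\}_{j=1}^k$ is admissible (your case split on $p\mid g$ versus $p\nmid g$, using $\gcd(g,b_1\cdots b_k)=1$, is exactly what is needed), run the Maynard sieve on this tuple to obtain $\bar X>m-1+\delta$, control the weighted count of primes at the finitely many non-tuple positions $c\in C$ by an auxiliary upper-bound sieve so that $\bar Y=o(1)$, and finish with a pigeonhole over the $\binom{k}{m}$ possible $m$-subsets. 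The estimate you single out as hardest, namely $\sum_n w_n\,\mathbf 1[gn+c\text{ prime}]\ll_k (\log N)^{-1}\sum_n w_n$ for $c\notin\{b_j\}$, is precisely the extra input beyond Theorem~\ref{thm:maynard-tao} that \cite{BFTB} requires; a clean form of it is proved in Maynard~\cite{M2}, via the Bombieri--Vinogradov calculation you anticipate, and your observation that inadmissibility of the enlarged tuple only shrinks $T(c)$ is correct.
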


Now, let $m\ge 2$ and $k\ge k_m$.  By Lemma~\ref{lem:two}
there exists a set of primes $\cP_k=\{b_1,\ldots,b_k\}$ such that
$\gcd(b_i-1,b_j-1)=|b_i-b_j|$ for all $1\le i<j\le k$, and each
$b_i$ exceeds $k$ (thus, the $k$-tuple $\{t + b_j\}_{j=1}^k$  is admissible).

Notice that $g=\prod_i(b_i-1)$ is coprime to $b_1\cdots b_k$. Otherwise, there are indices $i, j$, with $i\ne j$, for which $b_j \mid b_i-1$. But then $b_i \ge 2b_j+1$, and
\[ b_j + 1 \le b_i - b_j = \gcd(b_i-1,b_j-1) \le b_j-1, \]
which is absurd.

By Theorem~\ref{thm:BFTB} there is a subset
$\{h_1,\ldots,h_m\} \subseteq \{b_1,\ldots,b_k\}$ with the
property that $P_1=gn + h_1,\ldots,P_m=gn + h_m$ are consecutive primes
for infinitely many $n\in\N$.  Since $h_i-1\mid P_i-1$ for each $i$,
and
$$
|P_i-P_j|=|h_i-h_j|=\gcd(h_i-1,h_j-1)\qquad(1\le i<j\le m),
$$
it follows that $|P_i-P_j|=\gcd(P_i-1,P_j-1)$ when $i\ne j$, i.e.,
$\{P_i,P_j\}$ is a symmetric pair. This completes the proof of Theorem \ref{thm:main2}

\section{Computations}

In \cite{FLP} some values of $S(x)$ for $x$ up to the $10^5$th prime were given.  The data
did not strongly suggest that $S(x)=o(\pi(x))$;
 in fact, it seemed more plausible that $S(x)/\pi(x)\approx0.83$.
Using Mathematica we have extended the calculation to the $10^8$th prime and we see that $S(x)/\pi(x)$ continues
to be in no hurry to get to zero, but  progress towards this limit is somewhat discernible.   The descent to zero does
indeed appear to be not so different than the main term in our upper bound.

\begin{table}[h!]
  \begin{center}
    \caption{Tabulation of $S(p_n)$, the number of symmetric primes to the $n$th prime.}
    \label{tab:table1}
    \begin{tabular}{|l|c|c|c|} 
    \hline
      $n$ & $S(p_n)\vphantom{\Big)}$ & $S(p_n)/n$ & $1/(\log p_n)^\eta$\\
      \hline
     $ 10$ & 9 & 0.9000&0.9008\\
     $10^2$& 86 & 0.8600&0.8536\\
    $10^3$ & 864& 0.8640&0.8279\\
     $10^4$& 8473                &  0.8473 &0.8101 \\
    $10^5$& 83263 &0.8326&0.7964\\
   $10^6$ & 819848 & 0.8198&0.7854\\
    $10^7$ & 8098086& 0.8098&0.7761\\
      $10^8$&80112625&0.8011 &0.7681\\
      \hline
    \end{tabular}
  \end{center}
\end{table}


\section{Graph problems}

Consider a graph on the odd primes where two primes are connected by an edge if they form a symmetric pair.
The asymmetric primes are isolated nodes.  Must every connected component be finite?  At the
other extreme,
removing the asymmetric primes, is the graph connected?  If not, what is the least symmetric prime that
is not in the component containing the prime 3?  Does the graph have infinitely many components?
Does it contain a complete graph $K_m$ on $m$ vertices for every $m$?  The answer to this last question is ``yes", from Theorem \ref{thm:main2}. Clearly there cannot
exist an infinite complete subgraph since if $p<q$ are a symmetric pair, then $q<2p$.  Say a prime $p$ is
\emph{$m$-symmetric} if it is in a $K_m$.  It would be interesting to investigate the distribution
of $m$-symmetric primes; the number of them to $x$ is $\pi(x)/(\log x)^{O_m(1)}$, but what can be said about
the exponent here?
\bigskip

\noindent
{\bf Acknowledgment}.  We thank James Maynard for some helpful comments.

\bibliographystyle{amsplain}

\end{document}